\newtheorem{thm}{Theorem}[section]
\newtheorem{lemma}{Lemma}[section]
\numberwithin{equation}{section}
\journal{J. Math. Anal. Appl.}
\begin{document}

\begin{frontmatter}

\title{The convergence rate of  of multivariate operators on simplex in Orlicz space\tnoteref{mytitlenote}}
\tnotetext[mytitlenote]{This work was supported by the Ningxia Science and Technology Department [grant numbers 2019BEB04003]; and Ningxia Education Department [grant numbers NXYLXK2019A8]}

\author[mymainaddress]{Wan Ma}
\author[mymainaddress]{Lihong Chang}
\author[mymainaddress]{Yongxia Qiang\corref{mycorrespondingauthor}}
\cortext[mycorrespondingauthor]{Corresponding author}
\ead{415884392@qq.com}

\address[mymainaddress]{School of Mathematics and Computer Science, Ningxia Normal University, Guyuan City, Ningxia 756000,
                         People's Republic of China}

\begin{abstract}
  The approximation of functions in Orlicz space by multivariate operators on simplex is considered. The convergence rate is given by using modulus of smoothness.
\end{abstract}

\begin{keyword}
  Stancu-Kantorovi\v{c} operator\sep Meyer-K\"{o}nig-Zeller operator\sep Convergence rate \sep Orlicz space
\end{keyword}

\end{frontmatter}


\section{Introduction}

Let $\Phi (u)$ be a N-function, $\Psi$ be the complementary function of $\Phi$. We will say that $\Phi$ satisfies the $\Delta_2$-condition if $\Phi(2u)\leq c\Phi(u)$ for any $ u\geq u_0\geq 0$ with some constant $c$ independent of  $u.$

For $\triangle = \{x=(x_1,x_2)\in \mathrm{R^2}: x_1 + x_2\leq 1, x_1,x_2\geq 0\},$ the Orlicz space $ L_{\Phi}^{\ast}(\triangle) $ corresponding to the function  $\Phi$ consists of all Lebesgue-measurable functions $f(x)$ on $\triangle$ such that integral  $ \int_\triangle f(x)g(x)\mbox{d}x $ is finite for
any measurable functions $g(x)$ with $ \int_\triangle \Psi(g(x))\mbox{d}x < \infty. $

It is well-known that the space  $ L_{\Phi}^{\ast}(\triangle) $ becomes a complete normed space with Orlicz norm

$$ \|f\|_\Phi = \|f\|_{[\Phi,\triangle]} = \sup\left\{\left|\int_\triangle f(x)g(x)\mbox{d}x\right|:  \int_\triangle \Psi(g(x))\mbox{d}x \leq 1 \right\}. $$
It can be proved that

 $$ \|f\|_{\Phi} =  \inf_{\alpha>0}\frac{1}{\alpha}\left\{1+\int_\triangle \Phi(\alpha f(x))\mbox{d}x\right\}. $$
See \cite{ww83} for the above. For $ f\in L_{\Phi}^{\ast}(\triangle), $ we first extend $f(x)$ from $\triangle$  to $ D = [0,1]\times[0,1] $ according to $ f(x) = f(1-x) $, and then extend  $f(x)$ to $\mathrm{R^2}$ with period 1.   The nonnegative function

$$ \Omega_{\mathrm{R^2}}^{2}(f,r)_{\Phi} = \sup\{\omega _{h}^{2}(f,r)_{\Phi}:h = (h_1, h_2)\in \mathrm{R^2}, |h|=1\} $$

\noindent of the variable $r \geq 0$ will be called the 2-th order modulus of continuity of the function $ f\in L_{\Phi}^{\ast}(\triangle) $ in the Orlicz norm $\Phi.$ Here,  $ |h| = \sqrt{h_{1}^2+h_{2}^2}, $ and

$$ \omega _{h}^{2}(f,r)_{\Phi} = \sup_{|t|\leq r}\|f(x+th)+f(x-th)-2f(x)\|_{\Phi} $$
\noindent is the 2-th order modulus of continuity in the direction  $h$ of the function $ f. $

For any Lebesgue-measurable function $f(x)$ on $\triangle,$ the functional

\begin{equation}\label{eq:kn}
 K_{n}(f;x)=\sum_{k_1 =0}^{\infty}\sum_{k_2 =0}^{\infty} {\tilde{p}_{n,k_1,k_2}(x)} c_{n,k_1,k_2}\int_{\triangle _{k_1,k_2}}f(u)\mbox{d}u
\end{equation}

\noindent is called Meyer-K\"{o}nig-Zeller-$\mathrm{Kantorovi\check{c}}$ operator on $\triangle$ \cite{x94}; the functional

\begin{equation}\label{eq:kns} K_{n,s}(f;x)=\sum_{k+l\leq n}{b_{n,k,l,s}(x)} (n+2)^{2} \int_{I_{n,k,l}}f(u)\mbox{d}u\end{equation}

\noindent is called Stancu-$\mathrm{Kantorovi\check{c}}$ operator on $\triangle$\cite{xyc93}, where $ x\in \triangle , $  $ n\in \mathrm{Z^+}, $ $ k,s,l $ are nonnegative integers, $ 0\leq s <\frac{n}{2},$ and

$$ {\tilde{p}_{n,k_1,k_2}(x)} = \frac{(n+k_1+k_2)!}{n!k_{1}!k_{2}!}x_{1}^{k_1}x_{2}^{k_2}(1-x_1-x_2)^{n+1}, $$

$$ c_{n,k_1,k_2} = \frac{(n+k_{1}+k_{2})^{2}(n+k_{1}+k_{2}+1)^{2}}{(n+k_{1})(n+k_{2})}, $$

$$ \triangle _{k_1,k_2} = \left[ \frac{k_1}{n+k_{1}+k_{2}},\frac{k_{1}+1}{n+k_{1}+k_{2}+1}\right]\times \left[ \frac{k_2}{n+k_{1}+k_{2}},
\frac{k_{2}+1}{n+k_{1}+k_{2}+1}\right],$$

$$ I_{n,k,l} = \left[ \frac{k}{n+2}, \frac{k+1}{n+2}\right]\times\left[ \frac{l}{n+2}, \frac{l+1}{n+2}\right], $$

$$ {p_{n,k,l}(x)} = \frac{n!}{k!l!(n-k-l)!}x_{1}^{k}x_{2}^{l}(1-x_1-x_2)^{n-k-l}, $$

$$ {b_{n,k,l,s}(x)} = \begin{cases}
    (1-x_1-x_2)p_{n-s,k,l}(x), \quad k+l\leq n-s, 0\leq k,l <s; \\
    (1-x_1-x_2)p_{n-s,k,l}(x)+x_1 p_{n-s,k-s,l}(x), \\ \qquad \qquad \qquad \qquad \qquad \quad \;\;\; k+l\leq n-s, s\leq k, 0\leq l <s; \\
   (1-x_1-x_2)p_{n-s,k,l}(x)+x_2 p_{n-s,k,l-s}(x),  \\  \qquad \qquad \qquad \qquad \qquad \quad \;\;\; k+l\leq n-s, s\leq l, 0\leq k <s; \\
   (1-x_1-x_2)p_{n-s,k,l}(x)+x_1 p_{n-s,k-s,l}(x)+x_2 p_{n-s,k,l-s}(x,y),\\ \qquad \qquad \qquad \qquad \qquad \qquad  k+l\leq n-s,s\leq k, s\leq l; \\
   x_1 p_{n-s,k-s,l}(x), \quad \qquad \qquad n-s < k+l\leq n, s\leq k, 0\leq l <s; \\
   x_2 p_{n-s,k,l-s}(x), \quad \qquad \qquad n-s < k+l\leq n, 0\leq k<s, s\leq l; \\
   x_1 p_{n-s,k-s,l}(x)+x_2 p_{n-s,k,l-s}(x), \\ \qquad \qquad \qquad \qquad \qquad \qquad\;  n-s < k+l\leq n, s\leq k, s\leq l.
 \end{cases} $$

Denote $C$ a constant independent of $ f,n,s, $ and its value can be different in different positions. The convergence rate of the operators (\ref{eq:kn}) and (\ref{eq:kns}) in space $L_{p}$ has been studied (see \cite{x94, xyc93}). This paper intends to investigate their convergence in space $ L_{\Phi}^{\ast}(\triangle),$ and the main results are as follows.

\begin{thm}\label{thm:kn}For $ f\in L_{\Phi}^{\ast}(\triangle), $  if a N-function $\Phi (u)$ satisfies the $\Delta_2$-condition, then

$$ \| K_{n}(f)-f \|_{\Phi}\leq C \left(\frac{1}{n}\| f \|_{\Phi}+\Omega_{\mathrm{R^2}}^{2}\left(f,\sqrt{\frac{1}{n}}\right)_{\Phi}\right).  $$ \end{thm}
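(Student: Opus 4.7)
The plan is to follow the classical three-step strategy for approximation in Orlicz norm: boundedness of the operator, a Taylor-type estimate on smooth functions, and a K-functional reduction that links the smooth case to the modulus $\Omega^2$.

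First, I would establish the uniform boundedness $\|K_n(f)\|_{\Phi}\le C\|f\|_{\Phi}$ on $L_\Phi^\ast(\triangle)$. Using duality, it suffices to bound $\int_\triangle K_n(f;x)\,g(x)\,\mathrm{d}x$ over test functions $g$ with $\int_\triangle \Psi(g)\le 1$. After swapping sum and integral and applying Jensen's inequality to the N-function $\Phi$ on each average over $\triangle_{k_1,k_2}$, the $\Delta_2$-condition absorbs the constants $c_{n,k_1,k_2}$ and yields the bound, as in the $L_p$ treatment of \cite{x94}. The same mechanism gives continuity of $K_n$ with respect to translation and reflection, which is needed because $f$ is extended by reflection and periodicity.

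Second, I would compute (or recall from \cite{x94}) the first two central moments of $K_n$: $K_n(1;x)=1+O(1/n)$, $K_n(u_i-x_i;x)=O(1/n)$, and $K_n((u-x)\cdot(u-x)^{\mathsf T};x)=O(1/n)$ uniformly in $x\in\triangle$. These estimates, combined with a second-order Taylor expansion, give
$$\bigl|K_n(g;x)-g(x)\bigr|\le \frac{C}{n}\bigl(|g(x)|+\|\partial^{\alpha}g\|_{\infty,|\alpha|\le 2}\bigr)$$
for sufficiently smooth $g$. To transfer this to the Orlicz norm, I would not use the $L^\infty$ norm of $\partial^2 g$ but rather realize the second-derivative term pointwise via the integral representation of $K_n((u-x)^2;x)$, so that the whole estimate is integrated against an admissible $g$ in the dual.

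Third, I would introduce a Steklov-type mean $f_r$ associated with the second-order directional differences, so that on the extended torus
$$\|f-f_r\|_\Phi\le C\,\Omega_{\mathrm R^2}^2(f,r)_\Phi,\qquad r^2\,\|\partial^{\alpha}f_r\|_\Phi\le C\,\Omega_{\mathrm R^2}^2(f,r)_\Phi\quad(|\alpha|\le 2).$$
This is the standard K-functional equivalence for the modulus $\Omega^2$ in Orlicz norm; the $\Delta_2$-condition is used crucially here to pass between convolution smoothing and the norm. Then I would split
$$K_n(f)-f=K_n(f-f_r)-(f-f_r)+\bigl(K_n(f_r)-f_r\bigr),$$
apply the boundedness from step~1 to the first difference, the smooth estimate from step~2 to $f_r$, and choose $r=1/\sqrt{n}$ so that both the $(1/n)\|f\|_\Phi$ term and $\Omega_{\mathrm{R^2}}^2(f,1/\sqrt{n})_\Phi$ emerge with the right orders.

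The main obstacle I expect is step~3: establishing the K-functional equivalence with $\Omega_{\mathrm R^2}^2$ in the Orlicz norm \emph{after} the reflection-and-periodization extension. One has to verify that the Steklov means commute suitably with the extension and that the boundary of $\triangle$ does not produce spurious second-derivative growth; the $\Delta_2$-condition and the dual characterization of $\|\cdot\|_\Phi$ are the tools that make this work, but the bookkeeping of directions $h=(h_1,h_2)$ and the identification of $\Omega^2$ with a genuine K-functional is the delicate point.
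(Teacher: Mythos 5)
Your proposal matches the paper's argument in all essentials: boundedness of $K_n$ in the Orlicz norm via Jensen's inequality and the $\Delta_2$-condition, the moment estimates $K_n(1;x)=1$ and $K_n((u_i-x_i)^2;x)\le C/n$ combined with a second-order Taylor expansion whose remainder is controlled pointwise (the paper's Lemma~\ref{lem:de}), and the Steklov-mean decomposition $K_n(f)-f=\bigl(K_n(f-f_r)-(f-f_r)\bigr)+\bigl(K_n(f_r)-f_r\bigr)$ with $r=1/\sqrt{n}$. The only cosmetic differences are that you phrase the boundedness via duality rather than the infimum formula for $\|\cdot\|_\Phi$, and you frame the Steklov estimates as a K-functional equivalence where the paper only needs the one-sided inequalities of Lemma~\ref{lem:steklov}; these do not change the substance of the proof.
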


\begin{thm}\label{thm:kns}For $ f\in L_{\Phi}^{\ast}(\triangle), $  if a N-function $\Phi (u)$ satisfies the $\Delta_2$-condition, then

$$ \| K_{n,s}(f)-f \|_{\Phi}\leq C \left(\frac{1}{n}\| f \|_{\Phi}+\Omega_{\mathrm{R^2}}^{2}\left(f,\sqrt{\frac{1}{n}}\right)_{\Phi}\right).  $$ \end{thm}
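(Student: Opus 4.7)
The strategy mirrors the proof of Theorem~\ref{thm:kn}: regularize $f$ by a Steklov mean, apply the operator to the smooth approximant via a second-order Taylor expansion, and use duality against the complementary N-function $\Psi$ to push pointwise bounds to the Orlicz norm. Throughout, the $\Delta_2$-condition is what allows the passage between modular and norm estimates and the equivalence between a $K$-functional and the modulus $\Omega_{\mathrm{R^2}}^{2}$.

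The first step is the uniform bound $\|K_{n,s}g\|_\Phi \le C\|g\|_\Phi$ with $C$ independent of $n$ and $s$. Since $K_{n,s}$ is a positive linear operator with kernel $(n+2)^2 b_{n,k,l,s}(x)\mathbf{1}_{I_{n,k,l}}(u)$, the nonnegativity of the weights together with $\sum_{k+l\le n} b_{n,k,l,s}(x)=1$ (verified case-by-case from the eight-case definition) lets one apply Jensen's inequality inside the sum to the N-function $\Phi$ and then integrate in $x$ to obtain a modular bound. The $\Delta_2$-condition converts this into the claimed norm bound.

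Next, I decompose $f=f_r+(f-f_r)$ where $f_r$ is a two-dimensional second-order Steklov smoothing in two linearly independent directions over a window of size $r$. Under $\Delta_2$, the standard Steklov estimates in Orlicz norm read
\[
\|f-f_r\|_\Phi\le C\,\Omega_{\mathrm{R^2}}^{2}(f,r)_\Phi,\qquad r^{2}\|\partial_i\partial_j f_r\|_\Phi \le C\,\Omega_{\mathrm{R^2}}^{2}(f,r)_\Phi.
\]
Combined with the boundedness of $K_{n,s}$, the triangle inequality reduces the theorem to controlling $\|K_{n,s}f_r-f_r\|_\Phi$ with $f_r\in C^2(\triangle)$. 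Inserting the second-order Taylor expansion of $f_r$ about $x$ into (\ref{eq:kns}) and using the moment estimates established in \cite{xyc93}, namely $K_{n,s}(1;x)=1$, $K_{n,s}(u_i-x_i;x)=O(1/n)$, and $K_{n,s}((u_i-x_i)^2;x)=O(1/n)$ uniformly on $\triangle$, then estimating the remainder by duality against test functions $g$ with $\int_\triangle\Psi(g)\le 1$ yields
\[
\|K_{n,s}f_r-f_r\|_\Phi\le \tfrac{C}{n}\|f\|_\Phi+\tfrac{C}{n}\|D^2 f_r\|_\Phi \le \tfrac{C}{n}\|f\|_\Phi+\tfrac{C}{nr^{2}}\,\Omega_{\mathrm{R^2}}^{2}(f,r)_\Phi.
\]
Choosing $r=1/\sqrt n$ balances the two contributions and delivers the stated inequality.

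The main obstacle I anticipate is precisely the stochasticity and moment computations for the weights $b_{n,k,l,s}(x)$. Because of the eight-case piecewise definition -- with different combinations of the shifted Bernstein bases $p_{n-s,k,l}$, $p_{n-s,k-s,l}$, $p_{n-s,k,l-s}$ depending on whether $k$ and $l$ exceed $s$ and whether $k+l$ exceeds $n-s$ -- the sums $\sum b_{n,k,l,s}(x)u^\alpha$ for $|\alpha|\le 2$ have to be reassembled by careful index shifts $(k,l)\mapsto(k-s,l)$ and $(k,l)\mapsto(k,l-s)$. The boundary cases $n-s<k+l\le n$, which lack the weight $(1-x_1-x_2)p_{n-s,k,l}$, are what generate the $O(1/n)$ deviations responsible for the $\tfrac{1}{n}\|f\|_\Phi$ term in the statement. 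Once this bookkeeping is done, the remainder of the proof runs in parallel with that of Theorem~\ref{thm:kn}.
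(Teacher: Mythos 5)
Your proposal is correct and follows essentially the same route as the paper: the boundedness of $K_{n,s}$ via Jensen's inequality and the partition-of-unity property of $b_{n,k,l,s}$ (Lemma~\ref{lem:knsb}), the Steklov-mean decomposition with the estimates of Lemma~\ref{lem:steklov}, the Taylor expansion combined with the moment bounds of Lemma~\ref{lem:knse}, and the choice $r=1/\sqrt{n}$ — which is exactly the argument the paper gives for Theorem~\ref{thm:kn} and declares to carry over verbatim. The only cosmetic difference is that you transfer the pointwise remainder bound to the Orlicz norm by duality against $\Psi$, whereas the paper uses the maximal-function estimates of Lemma~\ref{lem:de}; both are standard and equivalent here.
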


\section{Lemmas}

\begin{lemma}\label{lem:knb}  $ K_n $ is a bounded linear operator, and $\| K_n \|_\Phi \leq 2. $ \end{lemma}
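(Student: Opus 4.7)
The plan is to prove a modular estimate of the form $\int_\triangle \Phi(\alpha |K_n f|)\,\mathrm{d}x \le 2\int_\triangle \Phi(\alpha |f|)\,\mathrm{d}x$ for every $\alpha>0$ and then convert it into the Orlicz norm bound via the infimum formula stated in the introduction. Linearity of $K_n$ is immediate from \eqref{eq:kn}. The analysis rests on two structural identities. First, the partition of unity $\sum_{k_1,k_2\ge 0}\tilde p_{n,k_1,k_2}(x)=1$ for every $x\in\triangle$, proved by grouping terms by $s=k_1+k_2$, applying the binomial theorem on the inner sum to obtain $\binom{n+s}{s}(x_1+x_2)^s$, and then summing the negative-binomial series $\sum_s\binom{n+s}{s}y^s=(1-y)^{-(n+1)}$ with $y=x_1+x_2$, which cancels the factor $(1-x_1-x_2)^{n+1}$. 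Second, $c_{n,k_1,k_2}=1/|\triangle_{k_1,k_2}|$, verified by computing the two side lengths of $\triangle_{k_1,k_2}$ as $(n+k_j)/[(n+k_1+k_2)(n+k_1+k_2+1)]$, $j=1,2$. Together these display $K_n(f;x)$ as a bona fide convex combination of averages of $f$ over the rectangles $\triangle_{k_1,k_2}$.

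With these identities, I would apply Jensen's inequality twice, once to the outer convex combination and once to each interior average, to obtain, for every $\alpha>0$,
\[
\Phi\!\bigl(\alpha|K_n(f;x)|\bigr)\le \sum_{k_1,k_2}\tilde p_{n,k_1,k_2}(x)\,c_{n,k_1,k_2}\!\int_{\triangle_{k_1,k_2}}\!\Phi(\alpha|f(u)|)\,\mathrm{d}u.
\]
Integrating in $x$ over $\triangle$, swapping sum and integral by Fubini, evaluating $\int_\triangle\tilde p_{n,k_1,k_2}(x)\,\mathrm{d}x$ by the standard Dirichlet integral, and using the reflection $f(x)=f(1-x)$ that extends $f$ from $\triangle$ to $D=[0,1]\times[0,1]$ with $\int_D\Phi(\alpha|f|)\,\mathrm{d}u=2\int_\triangle\Phi(\alpha|f|)\,\mathrm{d}u$, I would arrive at the target modular bound. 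Letting $\alpha^*$ realise the infimum in $\|f\|_\Phi=\inf_\alpha\tfrac{1}{\alpha}(1+\int_\triangle\Phi(\alpha|f|)\,\mathrm{d}x)$ and substituting into the corresponding expression for $\|K_nf\|_\Phi$, the modular inequality gives $\|K_nf\|_\Phi\le \tfrac{1}{\alpha^*}\bigl(1+2(\alpha^*\|f\|_\Phi-1)\bigr)=2\|f\|_\Phi-\tfrac{1}{\alpha^*}\le 2\|f\|_\Phi$, so $\|K_n\|_\Phi\le 2$.

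The main obstacle will be the Fubini step and the subsequent aggregation. The rectangles $\triangle_{k_1,k_2}$ overlap as $k_1+k_2$ varies, so after interchanging summation and integration one cannot simply read off the indicator $\chi_{\triangle_{k_1,k_2}}(u)$, and a direct computation shows that the individual weights $c_{n,k_1,k_2}\int_\triangle\tilde p_{n,k_1,k_2}(x)\,\mathrm{d}x$ are not uniformly bounded in $(k_1,k_2)$ (they grow like $(k_1+k_2)^{2}/(k_1 k_2)$ when one index dominates the other). The delicate combinatorial/analytic step is to show that, once the overlap pattern is accounted for and the extended/periodic $f$ is used, the aggregate contribution collapses to at most $\int_D\Phi(\alpha|f|)\,\mathrm{d}u=2\int_\triangle\Phi(\alpha|f|)\,\mathrm{d}u$; this is the real source of the constant $2$ in the lemma's conclusion.
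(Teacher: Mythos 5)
Your overall route is the paper's: write $K_n(f;x)$ as a double convex combination of averages (using $\sum_{k_1,k_2}\tilde p_{n,k_1,k_2}=1$ and $c_{n,k_1,k_2}=1/\mathrm{mes}\,\triangle_{k_1,k_2}$), apply Jensen, integrate in $x$, and convert a modular inequality into the Orlicz norm bound. But the proposal has a genuine gap at its center: the decisive inequality $\int_\triangle\Phi(\alpha|K_nf|)\,\mathrm{d}x\le 2\int_\triangle\Phi(\alpha|f|)\,\mathrm{d}x$ is never established. You explicitly defer it as "the delicate combinatorial/analytic step," and everything else in the argument (including the neat $\alpha^*$ conversion at the end, which is a perfectly valid alternative to the paper's use of $2\Phi(u)\le\Phi(2u)$ and $\|2f\|_\Phi=2\|f\|_\Phi$) is downstream of it. As written, the proof does not close.

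Moreover, your diagnosis of where the difficulty lies is mistaken, and this misdiagnosis is what sends you toward the unexecuted aggregation argument. The weights $c_{n,k_1,k_2}\int_\triangle\tilde p_{n,k_1,k_2}(x)\,\mathrm{d}x$ \emph{are} uniformly bounded: with $s=k_1+k_2$ one has $\int_\triangle\tilde p_{n,k_1,k_2}(x)\,\mathrm{d}x=\frac{n+1}{(n+s+1)(n+s+2)(n+s+3)}$, so the weight equals
\[
\frac{(n+1)(n+s)^2(n+s+1)}{(n+k_1)(n+k_2)(n+s+2)(n+s+3)}\le\frac{(n+1)(n+s)(n+s+1)}{n(n+s+2)(n+s+3)}\le\frac{n+1}{n}\le 2,
\]
since $(n+k_1)(n+k_2)=n(n+s)+k_1k_2\ge n(n+s)$. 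Your heuristic growth rate $(k_1+k_2)^2/(k_1k_2)$ comes from dropping the $n$'s in the denominator $(n+k_1)(n+k_2)$; with them retained the bound is uniform, and this termwise estimate is exactly how the paper (via its citation of Lemma 1 in \cite{x94}) produces the constant $2$. What then remains is only the covering fact that $\sum_{k_1,k_2}\int_{\triangle_{k_1,k_2}}\Phi(\alpha|f|)\,\mathrm{d}u$ is controlled by $\int_\triangle\Phi(\alpha|f|)\,\mathrm{d}u$ — the rectangles do overlap slightly, and the paper also delegates this to \cite{x94} — after which the $2$ is absorbed by convexity into $\|2f\|_\Phi=2\|f\|_\Phi$. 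If you replace your aggregation plan by the uniform weight bound above and supply (or cite) the covering estimate, your argument matches the paper's and is correct.
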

\begin{proof} The linearity of $ K_n $  is obvious. The following proves $\| K_n \|_\Phi \leq 2. $ After calculation, we can get

  $$ mes\triangle_{k_1, k_2} = \frac{(n+k_{1})(n+k_{2})}{(n+k_{1}+k_{2})^{2}(n+k_{1}+k_{2}+1)^{2}}, $$
  $$ \int_{\triangle}\tilde{p}_{n,k_1,k_2}(x) \mbox{d}x = \frac{n+1}{(n+k_{1}+k_{2}+3)(n+k_{1}+k_{2}+2)(n+k_{1}+k_{2}+1)}. $$

\noindent By using the \emph{Lemma 1} in \cite{x94}, Jensen inequality of convex function and the\emph{Theorem 1.4} in \cite{ww83}, we obtain

$$ \aligned \| K_{n}(f)\|_{\Phi} & = \inf_{\alpha > 0}\frac{1}{\alpha}\left\{1+\int_{\triangle}\Phi\left(\alpha K_{n}(f;x)\right)\mbox{d}x\right\}\\
& = \inf_{\alpha > 0}\frac{1}{\alpha}\left\{1+\int_{\triangle}\Phi\left(\alpha \sum_{k_1 =0}^{\infty}\sum_{k_2 =0}^{\infty}\tilde{p}_{n,k_1,k_2}(x) c_{n,k_1,k_2}\int_{\triangle _{k_1,k_2}}f(u)\mbox{d}u\right)\mbox{d}x\right\}\\
& \leq \inf_{\alpha > 0}\frac{1}{\alpha}\left\{1+\sum_{k_1 =0}^{\infty}\sum_{k_2 =0}^{\infty}\int_{\triangle}\tilde{p}_{n,k_1,k_2}(x)\mbox{d}x c_{n,k_1,k_2}\int_{\triangle _{k_1,k_2}}\Phi\left(\alpha f(u)\right)\mbox{d}u\right\}\\
& \leq \inf_{\alpha > 0}\frac{1}{\alpha}\{1+2\sum_{k_1 =0}^{\infty}\sum_{k_2 =0}^{\infty}\int_{\triangle _{k_1,k_2}}\Phi\left(\alpha f(u)\right)\mbox{d}u\}\\
& \leq \inf_{\alpha > 0}\frac{1}{\alpha}\left\{1+\int_{\triangle}\Phi\left(2\alpha f(u)\right)\mbox{d}u_{1}\mbox{d}u_{2}\right\}\\
& = \| 2f\|_{\Phi} \\ & =  2\|f\|_{\Phi}. \endaligned $$
\noindent Namely  $$ \|K_{n}\|_{\Phi}\leq 2.  \qedhere $$  \end{proof}

\begin{lemma}\label{lem:knsb}  $K_{n, s}$ is a bounded linear operator, and $ \| K_{n, s} \|_\Phi \leq 12. $ \end{lemma}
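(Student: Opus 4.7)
The plan is to mirror the argument of Lemma~\ref{lem:knb}, with the extra bookkeeping demanded by the piecewise definition of $b_{n,k,l,s}$. Linearity of $K_{n,s}$ is immediate from (\ref{eq:kns}), so the task is the norm bound. I would begin from the Luxemburg-type formula
$$\|K_{n,s}(f)\|_{\Phi}=\inf_{\alpha>0}\frac{1}{\alpha}\Bigl\{1+\int_{\triangle}\Phi\bigl(\alpha K_{n,s}(f;x)\bigr)\,dx\Bigr\}$$
and apply Jensen's inequality twice, exactly as in Lemma~\ref{lem:knb}. The first Jensen step requires the partition-of-unity identity $\sum_{k+l\leq n}b_{n,k,l,s}(x)\equiv 1$, which I would verify by substituting $k'=k-s$ or $l'=l-s$ in the atomic pieces of the seven branches of $b_{n,k,l,s}$ and regrouping; after re-indexing one recovers $(1-x_1-x_2)\sum p_{n-s,k,l}(x)+x_1\sum p_{n-s,k',l}(x)+x_2\sum p_{n-s,k,l'}(x)=(1-x_1-x_2)+x_1+x_2=1$. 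The second Jensen step uses $(n+2)^2\,\mathrm{mes}(I_{n,k,l})=1$. Together they yield
$$\int_{\triangle}\Phi\bigl(\alpha K_{n,s}(f;x)\bigr)\,dx\leq (n+2)^2\sum_{k+l\leq n}\Bigl(\int_{\triangle}b_{n,k,l,s}(x)\,dx\Bigr)\int_{I_{n,k,l}}\Phi(\alpha f(u))\,du.$$

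Next, using the Dirichlet integral $\int_{\triangle}x_1^ax_2^b(1-x_1-x_2)^c\,dx=a!\,b!\,c!/(a+b+c+2)!$, each of the three atomic building blocks $(1-x_1-x_2)p_{n-s,k,l}$, $x_1 p_{n-s,k-s,l}$, $x_2 p_{n-s,k,l-s}$ integrates to at most $1/((n-s+2)(n-s+3))$. The hypothesis $s<n/2$ yields $n-s+2>(n+2)/2$, so
$$\frac{(n+2)^2}{(n-s+2)(n-s+3)}\leq 4,$$
bounding $(n+2)^2$ times the integral of any single atomic piece by $4$. Since each $b_{n,k,l,s}$ is a sum of at most three atomic pieces (the worst branch being $s\leq k,\ s\leq l,\ k+l\leq n-s$), we obtain $(n+2)^2\int_{\triangle}b_{n,k,l,s}(x)\,dx\leq 12$.

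Finally, the squares $I_{n,k,l}$ for $k+l\leq n$ are pairwise disjoint and lie inside $\triangle$ (since $(k+1)/(n+2)+(l+1)/(n+2)\leq 1$), so summing gives
$$\int_{\triangle}\Phi\bigl(\alpha K_{n,s}(f;x)\bigr)\,dx\leq 12\int_{\triangle}\Phi(\alpha f(u))\,du\leq \int_{\triangle}\Phi(12\alpha f(u))\,du,$$
the last step by convexity of $\Phi$ with $\Phi(0)=0$. Taking the infimum over $\alpha$ gives $\|K_{n,s}(f)\|_{\Phi}\leq\|12f\|_{\Phi}=12\|f\|_{\Phi}$. The main obstacle I anticipate is the combinatorial bookkeeping---verifying that re-indexing over the seven branches of $b_{n,k,l,s}$ really does reconstruct $(1-x_1-x_2)+x_1+x_2$, and confirming that no branch contains more than three atomic pieces. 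The constant $12=3\times 4$ is then essentially forced by this approach: three atomic summands per branch, and a factor $4$ from $(n+2)^2/((n-s+2)(n-s+3))$ under $s<n/2$.
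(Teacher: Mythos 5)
Your proposal is correct and follows essentially the same route as the paper's proof: the Amemiya formula for the Orlicz norm, Jensen's inequality applied twice via the partition of unity $\sum b_{n,k,l,s}\equiv 1$ and the normalization $(n+2)^2\,\mathrm{mes}(I_{n,k,l})=1$, a bound of the form $3(n+2)^2/((n-s+2)(n-s+\cdot))\leq 12$ from $s<n/2$, and convexity to absorb the constant $12$ inside $\Phi$. The only cosmetic difference is that you integrate the weighted atomic pieces exactly via the Dirichlet integral (getting $(n-s+3)$ in the denominator), whereas the paper bounds the weights $x_1$, $x_2$, $1-x_1-x_2$ by $1$ and uses $\int_{\triangle}p_{n-s,k,l}=1/((n-s+1)(n-s+2))$; both yield the same constant.
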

\indent \begin{proof}  The linearity of $ K_{n, s} $  is obvious. The following proves $ \| K_{n, s} \|_\Phi \leq 12. $  After calculation, we can get

$$ mesI_{n, k, l} = mesI_{n, k+s, l} = mesI_{n, k, l+s} = \frac{1}{(n+2)^2} , $$
$$ \int_{\triangle}p_{n-s, k, l}(x)\mbox{d}x = \frac{1}{(n-s+2)(n-s+1)}. $$

By using the \emph{Lemma 2.1} in \cite{xyc93}, Jensen inequality of convex function and the \emph{Theorem 1.4} in \cite{ww83}, we obtain

$$ \aligned \| K_{n, s}(f)\|_{\Phi} & \leq \inf_{\alpha > 0}\frac{1}{\alpha}\left\{1+\sum_{k+l\leq n-s}\int_{\triangle}p_{n-s, k, l}(x)\mbox{d}x\left(\int_{I_{n, k, l}}+\int_{I_{n, k+s, l}}\int_{I_{n, k, l+s}}\right)\Phi\left(\alpha f(u)\right)\mbox{d}u\right\}\\
& \leq \inf_{\alpha > 0}\frac{1}{\alpha}\left\{1+\frac{3(n+2)^{2}}{(n-s+2)(n-s+1)}\int_{\triangle}\Phi(\alpha f(u))\mbox{d}u\right\}.\\
& \leq \inf_{\alpha > 0}\frac{1}{\alpha}\left\{1+\int_{\triangle}\Phi\left(\alpha 12f(u)\right)\mbox{d}u\right\}\\
& = \| 12f\|_{\Phi} \\ & =  12\|f\|_{\Phi}.\endaligned $$

\noindent Namely

  $$ \|K_{n, s}\|_{\Phi}\leq 12.  \qedhere $$  \end{proof}

\begin{lemma}\label{lem:kne} The following holds for (\ref{eq:kn}).

  $$ K_{n}(1; x) = 1, \quad K_{n}\left((u_i-x_i)^{i}; x\right) \leq \frac{C}{n}, \quad i = 1, 2.$$
\end{lemma}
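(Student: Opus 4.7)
The identity $K_n(1;x)=1$ should be immediate from the normalization constants already exhibited in Lemma~\ref{lem:knb}. Taking $f\equiv 1$ in \eqref{eq:kn} replaces the inner integral by $\operatorname{mes}\triangle_{k_1,k_2}$, and the two formulas computed in the proof of Lemma~\ref{lem:knb} give
\[
c_{n,k_1,k_2}\cdot\operatorname{mes}\triangle_{k_1,k_2}=1.
\]
So $K_n(1;x)=\sum_{k_1,k_2\ge 0}\tilde p_{n,k_1,k_2}(x)$, and this double series equals $1$ by the well-known negative multinomial identity
\[
\sum_{k_1,k_2\ge 0}\frac{(n+k_1+k_2)!}{n!\,k_1!\,k_2!}\,x_1^{k_1}x_2^{k_2}(1-x_1-x_2)^{n+1}=1,\qquad x\in\triangle.
\]

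For the moment estimates the plan is to reduce to the analogous bounds for the pure point-evaluation Meyer--K\"onig--Zeller operator on $\triangle$ supplied by \emph{Lemma~1} of \cite{x94}. Fix $i\in\{1,2\}$ and let $\xi_i:=k_i/(n+k_1+k_2)$, which is the left endpoint of the $i$-th edge of the rectangle $\triangle_{k_1,k_2}$. On $\triangle_{k_1,k_2}$ I expand
\[
(u_i-x_i)^{j}=\sum_{m=0}^{j}\binom{j}{m}(u_i-\xi_i)^{m}(\xi_i-x_i)^{j-m},
\]
integrate term by term, and use that $\triangle_{k_1,k_2}$ is a product of intervals of length of order $1/(n+k_1+k_2)$. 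Each integral $\int_{\triangle_{k_1,k_2}}(u_i-\xi_i)^{m}\mbox{d}u$ is elementary; after multiplication by $c_{n,k_1,k_2}$ it contributes a factor of order $(n+k_1+k_2)^{-m}$, and the surviving factor $(\xi_i-x_i)^{j-m}$ summed against $\tilde p_{n,k_1,k_2}(x)$ is a moment of the pure MKZ operator at the knots $\xi_i$, bounded by $C/n$ via \emph{Lemma~1} of \cite{x94}. Adding up the $O(j)$ terms in the binomial expansion then yields the claimed $C/n$ bound.

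The main technical annoyance will be that the right endpoint of $\triangle_{k_1,k_2}$ is $(k_i+1)/(n+k_1+k_2+1)$ rather than $(k_i+1)/(n+k_1+k_2)$, so the rectangle is not centered symmetrically about $\xi_i$ and the left-edge offset with respect to the natural MKZ denominator carries a correction of size $O((n+k_1+k_2)^{-2})$. The bookkeeping has to ensure this correction, together with the analogous one in the $k_2$ direction, stays in the higher-order remainder and does not spoil the $1/n$ rate; modulo that, the estimate falls out by combining the binomial expansion above with the known MKZ moment bounds.
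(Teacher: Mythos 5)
Your plan is mathematically sound, but it takes a genuinely different route from the paper, which offers no computation at all here: Lemma~\ref{lem:kne} (together with Lemma~\ref{lem:knse}) is disposed of in a single sentence by citing \emph{Lemma 3} of \cite{x94} and \emph{Lemma 2.1} of \cite{xyc93}, where these Kantorovich moment estimates are taken as already established. Your first part is a complete, self-contained proof: the measure computed in Lemma~\ref{lem:knb} gives $c_{n,k_1,k_2}\,\mathrm{mes}\,\triangle_{k_1,k_2}=1$, and the negative multinomial identity reduces $K_n(1;x)$ to $\sum\tilde p_{n,k_1,k_2}(x)=1$. Your second part --- binomial expansion about the knot $\xi_i=k_i/(n+k_1+k_2)$, with the Kantorovich averaging contributing factors bounded by $n^{-m}$ (each side of $\triangle_{k_1,k_2}$ has length at most $1/(n+k_1+k_2+1)$, so the endpoint asymmetry you worry about is harmless) and the residual sums being moments of the point-evaluation Meyer--K\"onig--Zeller operator --- is a valid reduction. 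Two remarks. First, your citation does not match the paper's internal usage: in this paper \emph{Lemma 1} of \cite{x94} is invoked for the bound $c_{n,k_1,k_2}\int_\triangle\tilde p_{n,k_1,k_2}(x)\,\mathrm{d}x\le 2$, while the moment estimates are what the authors attribute to \emph{Lemma 3} of \cite{x94}; you should confirm that the point-evaluation moment bounds you import are actually available in the source. Second, your reduction is stronger than you claim for the first moment: since $\xi_1\tilde p_{n,k_1,k_2}(x)=\frac{(n+k_1+k_2-1)!}{n!(k_1-1)!k_2!}x_1^{k_1}x_2^{k_2}(1-x_1-x_2)^{n+1}$ resums to $x_1$, the point-evaluation first moment vanishes identically and $0\le K_n(u_1-x_1;x)\le\frac{1}{2n}$ comes entirely from your ``correction'' term, so only the second moments of the point operator need to be imported. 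The paper's route buys brevity; yours buys a verifiable argument that also makes transparent why the rate $1/n$ (rather than $1/\sqrt{n}$) survives the Kantorovich modification.
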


\begin{lemma}\label{lem:knse}The following holds for (\ref{eq:kns}).

  $$ K_{n,s}(1; x) = 1, \quad K_{n,s}\left((u_i-x_i)^{i}; x\right) \leq \frac{C}{n}, \quad i = 1, 2.$$
\end{lemma}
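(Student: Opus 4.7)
The plan is to reduce the two claims to (a) the Bernstein moment identities for $p_{n-s,k,l}(x)$ on the simplex (provided by Lemma~2.1 of \cite{xyc93}) and (b) elementary integrations over the Kantorovich cell $I_{n,k,l}$.

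First, for $K_{n,s}(1;x)=1$, since $(n+2)^{2}\int_{I_{n,k,l}}du=1$, the claim reduces to $\sum_{k+l\le n}b_{n,k,l,s}(x)=1$. I would collect the eight cases in the definition of $b_{n,k,l,s}(x)$ into three families, corresponding respectively to the coefficients $(1-x_{1}-x_{2})$, $x_{1}$, and $x_{2}$. After the index shifts $k\mapsto k+s$ and $l\mapsto l+s$ on the $x_{1}$- and $x_{2}$-families, each family sums to $\sum_{k+l\le n-s}p_{n-s,k,l}(x)=1$, and the three contributions total $(1-x_{1}-x_{2})+x_{1}+x_{2}=1$.

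For the moment bounds, a direct one-dimensional integration gives
$$
(n+2)^{2}\!\!\int_{I_{n,k,l}}\!\!(u_{1}-x_{1})\,du=\frac{k+\tfrac12}{n+2}-x_{1},\quad
(n+2)^{2}\!\!\int_{I_{n,k,l}}\!\!(u_{2}-x_{2})^{2}\,du=\Bigl(\frac{l+\tfrac12}{n+2}-x_{2}\Bigr)^{2}+\frac{1}{12(n+2)^{2}}.
$$
The fine-scale term $1/(12(n+2)^{2})$, multiplied by $K_{n,s}(1;x)=1$, contributes $O(1/n^{2})$ to the second moment. The remaining sums are of $b_{n,k,l,s}(x)$ against the centred linear and quadratic polynomials $(k+\tfrac12)/(n+2)-x_{1}$ and $((l+\tfrac12)/(n+2)-x_{2})^{2}$. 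Splitting into the same three families and performing the shifts as above converts each into a first- or second-order Bernstein moment on the simplex, for which Lemma~2.1 of \cite{xyc93} supplies the explicit value.

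The main obstacle is the drift produced by the shift $k\mapsto k+s$: it replaces $(k+\tfrac12)/(n+2)$ with $(k+s+\tfrac12)/(n+2)$, generating an extra $s/(n+2)$ inside the summand. This must cancel against the corresponding $(1-x_{1}-x_{2})$ contribution, whose Bernstein first moment evaluates to $x_{1}(n-s)/(n+2)$ rather than the hoped-for $x_{1}$. Using the hypothesis $0\le s<n/2$, each residual drift is at most of order $s/n$ times $1/(n+2)$, hence $O(1/n)$; the same accounting applied to the quadratic moment, combined with the classical Bernstein variance $x_{1}(1-x_{1})/(n-s)=O(1/n)$, delivers the stated $C/n$ bound.
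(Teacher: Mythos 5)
Your normalization argument ($K_{n,s}(1;x)=1$ via the three-family split and the index shifts) and your first-moment computation are sound; the drift cancellation you worry about is in fact exact there: the shifted family carries drift $\approx s(1-x_1)/(n+2)$ with weight $x_1$, the two unshifted families carry drift $\approx -sx_1/(n+2)$ with total weight $1-x_1$, and these cancel, leaving a genuine $O(1/n)$ remainder. (For reference, the paper offers no computation at all for this lemma --- it only points to Lemma~2.1 of \cite{xyc93} --- so yours is the only argument on the table.)

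The gap is in the second moment, where you assert that ``the same accounting'' applies. It does not: there the drifts enter \emph{squared}, so instead of cancelling they add. Explicitly, grouping the weights into the three families with masses $1-x_1-x_2$, $x_1$, $x_2$, the bias contribution to $K_{n,s}\left((u_2-x_2)^2;x\right)$ is
$$ (1-x_2)\left(\frac{\tfrac12-(s+2)x_2}{n+2}\right)^{2}+x_2\left(\frac{\tfrac12-2x_2+s(1-x_2)}{n+2}\right)^{2}=\frac{s^{2}x_2(1-x_2)}{(n+2)^{2}}+O\!\left(\frac{s+1}{(n+2)^{2}}\right), $$
and the leading term $s^{2}x_2(1-x_2)/(n+2)^{2}$ is $O(1/n)$ only when $s=O(\sqrt{n})$; under the stated hypothesis $0\le s<n/2$ it can be of order $1$. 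The Bernstein variance piece $(n-s)x_2(1-x_2)/(n+2)^{2}$ is harmless, but the squared-drift piece is not. So either an additional restriction on $s$ must be imported (or Lemma~2.1 of \cite{xyc93} must be quoted in a form that carries an explicit $s^{2}/n^{2}$ term), or the claimed uniform $C/n$ bound is simply not reachable by this route; in any case the sentence ``the same accounting applied to the quadratic moment \dots delivers the stated $C/n$ bound'' is precisely where your argument breaks.
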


The proof of the Lemma \ref{lem:kne} and Lemma \ref{lem:knse}  can be obtained from the \emph{Lemma 3} in \cite{x94} and the \emph{Lemma 2.1} in \cite{xyc93}.

\begin{lemma}\label{lem:steklov} If we denote $ f_{r} $ the Steklov mean function for $ f\in L_{\Phi}^{\ast}(\triangle), $  i.e.

$$  f_{r}(x)=\frac{1}{r^4}\int_{[-r/2,r/2]^{4}}f(x+u+v)\mbox{d}s\mbox{d}t, $$

\noindent then

\begin{equation}\label{eq:steklov1}\left\| {f_r } \right\|_\Phi   \le C\left\| f \right\|_\Phi,\end{equation}
\begin{equation}\label{eq:steklov2}\left\| {f - f_r } \right\|_\Phi   \le C\Omega _{R^2 }^2 \left( {f,r} \right)_\Phi  ,\end{equation}
\begin{equation}\label{eq:steklov3}\left\| {\frac{{\partial f_r }}{{\partial x_1 }}} \right\|_\Phi   \le C\left( {\left\| {f_r } \right\|_\Phi   + \left\| {\frac{{\partial ^2 f_r }}{{\partial x_1^2 }}} \right\|_\Phi  } \right),\end{equation}
\begin{equation}\label{eq:steklov4}\left\| {\frac{{\partial f_r }}{{\partial x_2 }}} \right\|_\Phi   \le C\left( {\left\| {f_r } \right\|_\Phi   + \left\| {\frac{{\partial ^2 f_r }}{{\partial x_2^2 }}} \right\|_\Phi  } \right),\end{equation}
\begin{equation}\label{eq:steklov5}\left\| {\frac{{\partial ^2 f_r }}{{\partial x_1^2 }}} \right\|_\Phi   + \left\| {\frac{{\partial ^2 f_r }}{{\partial x_2^2 }}} \right\|_\Phi   + \left\| {\frac{{\partial ^2 f_r }}{{\partial x_1 \partial x_2 }}} \right\|_\Phi   \le \frac{C}{{r^2 }}\Omega _{R^2 }^2 \left( {f,r} \right)_\Phi.\end{equation}  \end{lemma}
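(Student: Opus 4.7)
The five estimates split naturally into three groups, which I treat in order: the direct bounds \eqref{eq:steklov1}--\eqref{eq:steklov2}, the main calculation \eqref{eq:steklov5}, and the Landau--Kolmogorov interpolation \eqref{eq:steklov3}--\eqref{eq:steklov4}. For \eqref{eq:steklov1}, I would apply Jensen's inequality to pull $\Phi$ inside the averaging integral, obtaining $\Phi(\alpha f_r(x)) \leq r^{-4}\int \Phi(\alpha f(x+u+v))\,du\,dv$, then integrate over $\triangle$, swap by Fubini, and translate the inner integral back to $\triangle$ using the periodic extension of $f$ and the $\Delta_2$-condition. For \eqref{eq:steklov2} I symmetrize $(u,v)\mapsto(-u,-v)$ to rewrite
\begin{equation*}
f_r(x) - f(x) = \frac{1}{2r^4}\int_{[-r/2,r/2]^4} \bigl[f(x+u+v) + f(x-u-v) - 2f(x)\bigr]\,du\,dv.
\end{equation*}
The integrand is a second order symmetric difference of $f$ in the unit direction $(u+v)/|u+v|$ with step $|u+v|\leq \sqrt 2\,r$, so its Orlicz norm is at most $\Omega_{\mathrm{R^2}}^{2}(f,\sqrt 2\,r)_\Phi$; passing the norm under the integral via the generalized Minkowski inequality for the Orlicz norm (available under $\Delta_2$) and using monotonicity of the modulus yields \eqref{eq:steklov2}.

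For \eqref{eq:steklov5}, write $u=(s_1,s_2)$, $v=(t_1,t_2)$. The decisive observation is that, since we average over two independent variables $u,v$, the chain rule gives $\partial_{x_1} f(x+u+v) = \partial_{s_1} f(x+u+v) = \partial_{t_1} f(x+u+v)$, and hence
\begin{equation*}
\frac{\partial^2 f_r}{\partial x_1^2}(x) = \frac{1}{r^4}\int \frac{\partial^2}{\partial s_1\partial t_1} f(x+u+v)\,ds\,dt.
\end{equation*}
Integrating by parts once in $s_1$ and once in $t_1$ collapses each to corner evaluations and produces a pure second order difference of $f$ in the $e_1$ direction with step $r$, whose $\|\cdot\|_\Phi$ norm is at most $\omega_{e_1}^2(f,r)_\Phi$; after multiplying by the remaining volume factor $r^2/r^4 = r^{-2}$ this gives the bound on $\partial^2 f_r/\partial x_1^2$, and symmetry gives $\partial^2 f_r/\partial x_2^2$. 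For the mixed partial the same device forces $\partial_{x_1}\partial_{x_2} = \partial_{s_1}\partial_{t_2}$, and integration by parts in $s_1$ and $t_2$ produces a four-point parallelogram difference $f(a)+f(d)-f(b)-f(c)$ about a common midpoint $m$; the algebraic identity
\begin{equation*}
f(a)+f(d)-f(b)-f(c) = \Delta^2_{(h+k)/2} f(m) - \Delta^2_{(h-k)/2} f(m),
\end{equation*}
with $h=a-b$, $k=a-c$, rewrites it as two symmetric second differences in the unit directions $(e_1 \pm e_2)/\sqrt 2$ with step $r/\sqrt 2$, each controlled by $\Omega_{\mathrm{R^2}}^{2}(f,r)_\Phi$.

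Finally, for \eqref{eq:steklov3} and \eqref{eq:steklov4} I use a one-dimensional Taylor expansion
\begin{equation*}
f_r(x + he_1) - f_r(x) = h\,\frac{\partial f_r}{\partial x_1}(x) + \int_0^h (h-t)\,\frac{\partial^2 f_r}{\partial x_1^2}(x+te_1)\,dt,
\end{equation*}
solve for $\partial f_r/\partial x_1$, take $\|\cdot\|_\Phi$ using translation bounds from the periodic extension, and choose a fixed $h$ (of order $1$) to balance the two resulting terms; the $x_2$ version is identical. The main obstacle I anticipate is the mixed partial step in \eqref{eq:steklov5}: one must both spot the $\partial_{s_1}\partial_{t_2}$ trick (so that double integration by parts delivers a true difference instead of an interior evaluation) and identify the resulting four-point expression with a genuine symmetric second-order modulus in an admissible unit direction. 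A pervasive secondary nuisance is that $\triangle$ is not translation invariant, so every occurrence of $\|f(\cdot + h)\|_\Phi$ has to be bounded by $C\|f\|_\Phi$ by transferring to the periodic extension of $f$ and invoking $\Delta_2$.
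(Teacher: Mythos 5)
Your proposal is correct, but it is worth noting that the paper itself contains no proof of this lemma: it merely asserts that \eqref{eq:steklov1}, \eqref{eq:steklov2}, \eqref{eq:steklov5} ``can be directly verified'' and refers \eqref{eq:steklov3}--\eqref{eq:steklov4} to \emph{Lemma 1a} of \cite{r84}. What you have written is the standard argument that makes ``directly verified'' precise, and it is sound: Jensen plus Fubini for \eqref{eq:steklov1}; the symmetrization $(u,v)\mapsto(-u,-v)$ for \eqref{eq:steklov2}, which is exactly the reason a \emph{doubly} averaged Steklov mean over a symmetric cube yields a second-order modulus rather than a first-order one; the $\partial_{x_i}\partial_{x_j}=\partial_{s_i}\partial_{t_j}$ device and the four-corner evaluation for \eqref{eq:steklov5}, including the correct parallelogram identity reducing the mixed-partial term to two symmetric second differences in the directions $(e_1\pm e_2)/\sqrt2$; and the Taylor-remainder (Landau-type) interpolation for \eqref{eq:steklov3}--\eqref{eq:steklov4}, which is indeed the content of the cited lemma of \cite{r84}. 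Two small wording corrections: in \eqref{eq:steklov2} the step of the second difference is $|u+v|\le\sqrt2\,r>r$, so you cannot invoke ``monotonicity of the modulus'' to pass from $\Omega^2_{\mathrm{R^2}}(f,\sqrt2\,r)_\Phi$ to $\Omega^2_{\mathrm{R^2}}(f,r)_\Phi$ (monotonicity runs the other way); you need the standard scaling property $\Omega^2_{\mathrm{R^2}}(f,\lambda r)_\Phi\le(1+\lambda)^2\,\Omega^2_{\mathrm{R^2}}(f,r)_\Phi$. Likewise, the integral Minkowski inequality for the Orlicz norm follows from the duality definition of $\|\cdot\|_\Phi$ and does not require the $\Delta_2$-condition. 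Neither point affects the validity of the argument.
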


(\ref{eq:steklov1}), (\ref{eq:steklov2}), (\ref{eq:steklov5}) can be directly verified, and the proof of (\ref{eq:steklov3}),(\ref{eq:steklov4}) is similar to that of the \emph{Lemma 1a} in \cite{r84}. If N-function $\Phi$ satisfies the $\Delta_2$-condition, then $ L_{\Phi}^{\ast} $ is separable. This leads to the following conclusion\cite{x93}.

\begin{lemma}\label{lem:de}If N-function $\Phi$ satisfies the $\Delta_2$-condition, then

  $$ \left\|\sup_{u_1\neq x_1}\frac{1}{u_{1}-x_{1}}\int_{x_1}^{u_1}\left|\frac{\partial ^{2}f_{r}(\xi, x_2)}{\partial \xi^2}\right|\mbox{d}\xi\right\|_\Phi\leq C \left\| {\frac{{\partial ^2 f_r }}{{\partial x_1^2 }}} \right\|_\Phi, $$
  $$ \left\|\sup_{u_2\neq x_2}\frac{1}{u_{2}-x_{2}}\int_{x_2}^{u_2}\left|\frac{\partial ^{2}f_{r}(x_1, \eta)}{\partial \eta^2}\right|\mbox{d}\eta\right\|_\Phi\leq C \left\| {\frac{{\partial ^2 f_r }}{{\partial x_2^2 }}} \right\|_\Phi, $$
  $$ \left\|\sup_{u_2\neq x_2}\frac{1}{u_{2}-x_{2}}\int_{x_{2}}^{u_2}\left(\sup_{u_1\neq x_1}\int_{x_1}^{u_1}\left|\frac{\partial ^{2}f_{r}(\xi, \eta)}{\partial \xi \partial \eta}\right|
\mbox{d}\xi\right)\mbox{d}\eta\right\|_\Phi\leq C \left\| {\frac{{\partial ^2 f_r }}{{\partial x_1\partial x_2 }}} \right\|_\Phi. $$
\end{lemma}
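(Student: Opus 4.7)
The strategy is to recognize each left-hand side as a one-dimensional Hardy--Littlewood maximal operator (or an iterated version of one) applied to a partial derivative of $f_r$, and then to invoke the $L_\Phi^*$-boundedness of that maximal operator, which is available under the $\Delta_2$-condition via the separability of $L_\Phi^*$ that was noted just before the lemma.

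For the first inequality, fix $x_2$ and set $g(\xi):=\left|\partial^2 f_r(\xi,x_2)/\partial\xi^2\right|$. The inner quantity
$$Mg(x_1)=\sup_{u_1\neq x_1}\frac{1}{u_1-x_1}\int_{x_1}^{u_1}g(\xi)\,\mathrm{d}\xi$$
is precisely a one-sided Hardy--Littlewood maximal function in the variable $x_1$. Using the modular representation $\|\cdot\|_\Phi=\inf_{\alpha>0}\tfrac{1}{\alpha}\{1+\int\Phi(\alpha\,\cdot\,)\}$ given in the introduction, combined with Fubini, I would reduce the two-variable Orlicz estimate to the one-dimensional maximal inequality $\|Mg\|_{\Phi,x_1}\leq C\|g\|_{\Phi,x_1}$ applied slice by slice in $x_2$. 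This one-variable Orlicz-space inequality is exactly the content of \cite{x93}. The second inequality follows by the identical argument, swapping the roles of $x_1$ and $x_2$.

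For the third inequality, I would iterate: with $\eta$ fixed, apply the $\xi$-maximal inequality to $\left|\partial^2 f_r(\xi,\eta)/\partial\xi\partial\eta\right|$ to bound the inner supremum by a function of $\eta$ whose $L_\Phi$-norm in $\xi$ is controlled by the mixed partial derivative; then, treating the resulting expression as a function of $\eta$, apply the one-variable maximal inequality a second time in $\eta$. Executing both steps inside the modular functional and interchanging integrals via Fubini delivers the desired bound in terms of $\|\partial^2 f_r/\partial x_1\partial x_2\|_\Phi$. Measurability of the intermediate function in $\eta$ is straightforward from standard arguments for maximal functions.

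The main obstacle is justifying the underlying one-dimensional maximal inequality on $L_\Phi^*$ using only the $\Delta_2$-condition, since in general Orlicz-space boundedness of $M$ also wants $\Psi\in\Delta_2$. The way around this, as in \emph{Lemma 1a} of \cite{r84} and \cite{x93}, is to exploit that $\Delta_2$ already yields separability of $L_\Phi^*$: continuous (or trigonometric-polynomial) approximations are dense, the classical pointwise estimates for $M$ apply to them, and the modular form of the Orlicz norm allows us to pass to the limit. A secondary care point is the legitimacy of interchanging the supremum with Fubini in the iterated case, which is handled by replacing the supremum by a countable dense set of $u_1,u_2$ using separability.
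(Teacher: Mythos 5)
Your structural reduction is the right one and, as far as the paper goes, matches it: the paper offers no argument for this lemma beyond the remark that $\Delta_2$ implies separability of $L_{\Phi}^{\ast}$ together with the citation of \cite{x93}, and the content being invoked there is exactly the slice-by-slice one-dimensional Hardy--Littlewood maximal inequality (iterated twice for the mixed derivative) that you describe. Your treatment of the measurability of the iterated supremum via a countable dense set of $u_1,u_2$ is also fine.

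The gap is in your proposed justification of the one-dimensional maximal inequality itself, which is the entire substance of the lemma. Boundedness of the Hardy--Littlewood maximal operator $M$ on an Orlicz space is known to be equivalent to a $\nabla_2$-type condition, i.e.\ to the complementary function $\Psi$ satisfying $\Delta_2$ --- not to $\Phi\in\Delta_2$, which is what you (and the lemma) assume. You correctly identify this as the main obstacle, but your workaround --- approximate by continuous functions, apply classical pointwise estimates to them, and pass to the limit using the modular form of the norm --- cannot close it: $M$ is sublinear, and a density argument extends an estimate from a dense subspace only when one already has a uniform norm or modular bound on that subspace. Finiteness of $Mg$ for continuous $g$ provides no such bound, and if $M$ fails to be bounded on $L_{\Phi}^{\ast}$ no limiting procedure will manufacture the inequality. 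Separability is likewise irrelevant to the quantitative estimate; it only helps with the measurability of the supremum. So either the needed one-variable inequality must be taken from \cite{x93} as a black box (which is what the paper does, and which you could also do explicitly), or the hypothesis on $\Phi$ has to be strengthened; the density argument as written is not a proof.
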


\section{Proof of the main results}

The proof of the Theorem \ref{thm:kn} and Theorem \ref{thm:kns} is similar, so only the Theorem \ref{thm:kn} is proved below.

\begin{proof}  Because

$$ \aligned  f_{r}(u)-f_{r}(x)  = (u_1-x_1)\frac{\partial f_{r}(x)}{\partial x_1} & +(u_2-x_2)\frac{\partial f_{r}(x)}{\partial x_2} +\int_{x_1}^{u_1}(u_1-\xi)\frac{\partial^{2} f_{r}(\xi ,x_2)}{\partial \xi^2}\mbox{d}\xi\\
{}\\
& + \int_{x_2}^{u_2}(u_2-\eta)\frac{\partial^{2} f_{r}(x_1,\eta)}{\partial \eta^2}\mbox{d}\eta+
\int_{x_1}^{u_1}\int_{x_2}^{u_2}\frac{\partial^{2} f_{r}(\xi, \eta)}{\partial \xi\partial \eta}\mbox{d}\eta\mbox{d}\xi,\endaligned $$

\noindent  so

$$ \aligned  \left|K_{n}(f_{r}; x)-f_{r}(x)\right| \leq & \left|K_{n}((u_{1}-x_{1}); x)\right|\left|\frac{\partial f_{r}(x)}{\partial x_1}\right|+\left|K_{n}((u_{2}-x_{2}); x)\right|\left|\frac{\partial f_{r}(x)}{\partial x_2}\right|+ \\
& \left|K_{n}((u_{1}-x_{1})^{2}; x)\right|\left(\sup_{u_1\neq x_1}\frac{1}{u_{1}-x_{1}}\int_{x_1}^{u_1}\left|\frac{\partial ^{2}f_{r}(\xi, x_2)}{\partial \xi^2}\right|\mbox{d}\xi\right)+\\
&\left|K_{n}((u_{2}-x_{2})^{2}; x)\right|\left(\sup_{u_2\neq x_2}\frac{1}{u_{2}-x_{2}}\int_{x_2}^{u_2}\left|\frac{\partial ^{2}f_{r}(x_1, \eta)}{\partial \eta^2}\right|\mbox{d}\eta\right)+\\
& \left|K_{n}(|u_{1}-x_{1}||u_{2}-x_{2}|; x)\right|\left(\sup_{u_2\neq x_2}\frac{1}{u_{2}-x_{2}}\int_{x_{2}}^{u_2}\left(\sup_{u_1\neq x_1}\int_{x_1}^{u_1}\left|\frac{\partial ^{2}f_{r}(\xi, \eta)}{\partial \xi \partial \eta}\right|
\mbox{d}\xi\right)\mbox{d}\eta\right). \endaligned $$

\noindent Noticing

 $$ |K_{n}(|u_{1}-x_{1}||u_{2}-x_{2}|; x)|\leq \frac{1}{2}|K_{n}((u_{1}-x_{1})^{2}; x)|+|K_{n}((u_{2}-x_{2})^{2}; x)|, $$

we continue the above estimation using the Lemma \ref{lem:kne}, Lemma \ref{lem:steklov} and Lemma \ref{lem:de}.

$$ \aligned \| K_{n}(f_{r})-f_{r}\|_{\Phi} & \leq \frac{C}{n}\left(\left\|\frac{\partial f_{r}}{\partial x_1}\right\|_{\Phi}+
  \left\|\frac{\partial f_{r}}{\partial x_2}\right\|_{\Phi}+\left\|\frac{\partial ^{2}f_{r}}{\partial x_1^2}\right\|_{\Phi}+
  \left\|\frac{\partial ^{2}f_{r}}{\partial x_2^2}\right\|_{\Phi}+
  \left\|\frac{\partial ^{2}f_{r}}{\partial x_1\partial x_2}\right\|_{\Phi}\right)\\
& \leq \frac{C}{n}\left(\left\|f\right\|_{\Phi}+\frac{1}{r^2}\Omega_{\mathrm{R}^2}^{2}(f,r)_{\Phi}\right).\endaligned $$

\noindent If $ r= \sqrt{\frac{1}{n}}, $  then

$$\|K_{n}(f_{r})-f_{r}\|_{\Phi}\leq \frac{C}{n}\left(\|f\|_{\Phi}+n\Omega_{\mathrm{R}^2}^{2}\left(f,\sqrt{\frac{1}{n}}\right)_{\Phi}\right). $$

\noindent For $ f\in L_{\Phi}^{\ast}(\triangle), $ using the Lemma \ref{lem:knb} and Lemma \ref{lem:steklov} we get

$$ \aligned \|K_{n}(f)-f\|_{\Phi} & \leq \| K_{n}(f)-K_{n}(f_{r})\|_{\Phi}+\|K_{n}(f_{r})-f_{r}\|_{\Phi}+\| f_{r}-f\|_{\Phi}\\
& \leq 3 \| f_{r}-f\|_{\Phi}+\|K_{n}(f_{r})-f_{r}\|_{\Phi}\\
& \leq C \Omega_{\mathrm{R}^2}^{2}(f,r)_{\Phi}+C\left(\frac{1}{n}\|f\|_{\Phi}+\Omega_{\mathrm{R}^2}^{2}\left(f,\sqrt{\frac{1}{n}}\right)_{\Phi}\right)\\
& \leq C \left(\frac{1}{n}\| f \|_{\Phi}+\Omega_{\mathrm{R}^2}^{2}(f,\sqrt{\frac{1}{n}})_{\Phi}\right).\hspace{3.5cm}\qedhere\endaligned $$  \end{proof}

\section{Remark}

If N-function $ \Phi (u)=u^{p} $ $ (1<p<\infty), $ then $ L_{\Phi}^{\ast}(\triangle)=L_p.$ Thus the corresponding conclusions in \cite{x94} and \cite{xyc93} can be obtained from the Theorem \ref{thm:kn} and Theorem \ref{thm:kns}.

\section*{References}

\bibliography{mybibfile}

\end{document}